\newtheorem{lemma}{Lemma}
\newtheorem{theorem}{Theorem}
\theoremstyle{definition}
\newtheorem{example}{Example}
\author{Paul Levrie\affiliationmark{1,2}
 \and John Campbell\affiliationmark{3}}
\title[Series acceleration formulas]{Series acceleration formulas obtained from\\ 
 experimentally discovered hypergeometric\\ 
 recursions} 
\affiliation{
 UAntwerpen, Belgium\\
 KU Leuven, Belgium\\
 York University, Canada}
\keywords{computer proof, Wilf--Zeilberger method, series acceleration, hypergeometric series}
\begin{document}
\publicationdetails{24}{2022}{2}{12}{9557}
\maketitle
\begin{abstract}
 In 2010, Kh.\ Hessami Pilehrood and T.\ Hessami Pilehrood introduced generating function identities used to obtain series accelerations for values of 
 Dirichlet's $\beta$ function, via the Markov--Wilf--Zeilberger method. Inspired by these past results, together with related results introduced by Chu et al., 
 we introduce a variety of hypergeometric recurrences. We prove these recurrences using the WZ method, and we apply these recurrences to obtain series 
 acceleration identities. We introduce a family of summations generalizing a Ramanujan-type series for $\frac{1}{\pi^2}$ due to Guillera, and a family of 
 summations generalizing an accelerated series for Catalan's constant due to Lupa\c{s}, and many related results. 
\end{abstract}

\section{Introduction}
 In a recent article by \cite{Campbell2022Discrete}, the hypergeometric transform 
\begin{equation}\label{fromDML}
 {}_{3}F_{2}\!\!\left[ \begin{matrix} a, b, 1 
 \vspace{1mm}\\ 
 c, 2 \end{matrix} \ \Bigg| \
 {1} \right] = - \frac{c-1}{(a-1) (b-1)} + \frac{c-1}{(a-1) (b-1)} 
 {}_{2}F_{1}\!\!\left[ \begin{matrix} a - 1, b - 1 
 \vspace{1mm}\\ 
 c - 1 \end{matrix} \ \Bigg| \
 {1} \right] 
\end{equation}
 was proved using the Wilf--Zeilberger (WZ) 
 method \cite{PetkovsekWilfZeilberger1996} and was applied to formulate a new proof of Euler's formula $\zeta(2) = 
 \pi^2/6$. The identity in \eqref{fromDML} was also used in relation to some problems on binomial-harmonic sums given by \cite{WangChu2022}. In this article, 
 we provide a number of hypergeometric identities resembling \eqref{fromDML} that we obtained experimentally. We prove them with the 
 WZ method \cite{PetkovsekWilfZeilberger1996}, and use them to formulate and apply series acceleration methods. 
 Many previously published closed forms for fast converging series by many different authors, 
 including \cite{Ramanujan1914}, 
 \cite{Guillera2003,Guillera2008}, 
 \cite{HessamiPilehroodHessamiPilehrood2008,HessamiPilehroodHessamiPilehrood2010}, 
 and \cite{Lupas2000}, 
 are special cases of our telescoping-based series acceleration techniques. 
 
 Our methods are related to the work of \cite{Catalan1867} and \cite{Fabry2020} on series evaluations, 
 and to the first author's previous work concerning series by 
 {W}allis, {F}orsyth, and {R}amanujan \cite{LevrieNimbran2018}; also, see \cite{Ojanguren2018}. Catalan and Fabry used a method due to \cite{Kummer1837} to accelerate the convergence of some series.
 The repeated application of Kummer's transformation leads to new series. One of the examples given by Fabry is based on the series 
 \begin{equation} \label{Fabry}
 \frac{\pi^2}{12} = \sum_{n=1}^\infty \frac{(-1)^{n-1}}{n^2} .
 \end{equation}
 This example was later taken over by \cite{Knopp1990} in his classic text, 
 where we find the following exercise \cite[129.c), p. 272]{Knopp1990}: Prove that 
 $$\sum_{n=1}^\infty \frac{(-1)^{n-1}}{n^2(n+1)^2\ldots(n+p-1)^2} 
 = \frac{5p+2}{4(p+1)p!^2}-\frac{p(p+1)^3}{4}\sum_{n=1}^\infty\frac{(-1)^{n-1}}{n^2(n+1)^2\ldots(n+p+1)^2}. $$ 
 Using this recurrence with $p=1$ transforms the series in \eqref{Fabry} into a new series that converges faster. Repeated applications 
 of the recurrence lead to the series 
 $$ \frac{\pi^2}{12} = \frac{7}{8} - \frac{8}{4} \cdot \left(\frac{17}{576}-\frac{192}{4} \cdot \left(\frac{27}{345600}-\frac{1080}{4} \cdot \left( \ldots \right)\right) \right) = 
 \frac{1}{4}\sum_{n=1}^\infty (-1)^{n-1}\frac{10n-3}{n^2(2n-1) \binom{2n}{n}}$$
 with convergence rate $\tfrac{1}{4}$ \cite[p.138]{Fabry2020}. The above recurrence can be rewritten and generalized in terms of hypergeometric functions. 
 This result is given in the second part of Theorem \ref{theoremfurther3F2n1} below. 

 While our hypergeometric recurrences given in Sections \ref{afterIntro}--\ref{section6F51} are of interest in their own right, the main results of this 
 article are our new telescoping proofs of known identities equating fundamental constants with fast convergent series, together with 
 related evaluations for new, fast converging series. We provide, for example, new proofs and generalizations of remarkable formulas for $\frac{1}{\pi^2}$ 
 that were discovered and proved by Guillera 
 \cite{Guillera2003,Guillera2008}
 and that are indicated in Table \ref{table:1}. It is remarkable how the WZ proof certificates for many of our recurrences are 
 hugely complicated compared to the elegance and simplicity of our closed forms obtained from these recurrences. However, the WZ proofs are 
 straightforward compared to Fabry/Kummer-like proofs of these results. These require some ingenuity and are all based on special identities enabling a 
 telescoping effect, as we explain in more detail in Section \ref{sectionnoncomp}. 

\subsection{Related literature}
 Much of this article is inspired by the methods and results introduced by \cite{HessamiPilehroodHessamiPilehrood2010}, 
 who introduced 
 a number of fast convergent series for Catalan's constant $G = \sum _{n=0}^{\infty } \frac{(-1)^n}{(2 n+1)^2}$. 
 These series are indicated in Table \ref{table:1}, which also includes a number of other previously published 
 series involved in or otherwise related to our work. 

\begin{table}[h!]
\centering
\begin{tabular}{|c c c|} 
 \hline
 Constant & Summand & Convergence rate \\ [0.5ex] 
 $G$ & 
 $\frac{1}{8} \frac{2^{4n} (6n-1) }{ n^2 (2n-1) \binom{4n}{2n} \binom{2n}{n}} $ & 
 $\frac{1}{4}$ \\ [1.5ex] 
 $G$ & $ - \frac{1}{64} \frac{ (-2^{8})^{n} (40 n^2 - 24 n + 3) }{ n^3 (2n-1)\binom{4n}{2n}^2 \binom{2n}{n} } $ 
 & $ \frac{1}{4} $ \\ [1.5ex]
 $G$ & $-\frac{1}{2}\frac{(-2^{3})^n (3 n - 
 1)}{ n^3 \binom{2 n}{n}^3 }$ & $ \frac{1}{8}$ \\ [1.5ex]
 $G$ & $\frac{1}{64} 
 \frac{2^{8n} (580 n^2 - 184 n + 15) }{n^3 (2n-1) \binom{6n}{3n} \binom{6n}{4n} \binom{4n}{2n}} $ & $\frac{4}{729}$ 
 \\ [1.5ex]
 $\pi^2$ & $ 2 \frac{ 2^{4 n} (3 n-1) }{n^3 \binom{2 n}{n}^3}$ 
 & $ \frac{1}{4} $ \\ [1.5ex]
 $\pi^2$ & $ 4 \frac{ 2^{4 n} (n+1) (3 n+1) }{n (2 n+1)^2 \binom{2
 n}{n}^3}$ & $\frac{1}{4}$ \\ [1.5ex]
 $\pi^2$ & $ 
 6 \frac{21 n-8}{n^3 \binom{2 n}{n}^3} 
 $ & $\frac{1}{64}$ \\ [2ex]
 $ \pi^{-1} $ & $ 
 \frac{1}{2} (-2^{-6})^n \binom{2n}{n}^3 (4n+1) $ & $1$ \\ [1ex]
 $ \pi^{-1} $ & $\frac{1}{16} \left( 2^{-12} \right)^{n} \binom{2 n}{n}^3
 (42 n+5) $ & $\frac{1}{64}$ \\ [1ex]
 $ \pi^{-2} $ & $\frac{1}{8} \left(- 2^{-12} \right)^n \binom{2 
 n}{n}^5 \left(20 n^2+8 n+1\right)$ & $\frac{1}{4}$ \\ [1ex]
 $\pi^{-2}$ & $\frac{1}{128} \left(-{2^{-20}}\right)^n \binom{2 n}{n}^5 
 \left(820 n^2+180 n+13\right) $ & $\frac{1}{1024}$ \\ [1ex] 
 \hline
\end{tabular}
\caption{Previously known series involved in or otherwise related to our work.
 For each of the series indicated in this table, we let it be understood that the lower limit is $n = 1$ 
 if the denominator of the corresponding summand vanishes for $n = 0$ 
 and that the lower limit is $n = 0$ otherwise. References for the above series are given, respectively, 
 as follows: \cite{HessamiPilehroodHessamiPilehrood2010}, \cite{Campbell2022AMEN,HessamiPilehroodHessamiPilehrood2010,Lima2012,Lupas2000}, \cite{Guillera2008,Lima2012}, 
 \cite{HessamiPilehroodHessamiPilehrood2010}, \cite{Guillera2008,HessamiPilehroodHessamiPilehrood2010}, 
 \cite{Campbell2022AMEN}, \cite{HessamiPilehroodHessamiPilehrood2008}, \cite{Bauer1859}, \cite{Ramanujan1914}, \cite{Guillera2003}, 
 and \cite{Guillera2008}. }
\label{table:1}
\end{table}

 The second formula for $G$ shown in Table \ref{table:1} is used in the Wolfram Language to calculate $G$ \cite{MarichevSondowWeisstein}. This 
 motivates the techniques and applications given in this article, in which a new acceleration method is applied to generalize this formula for $G$ and many 
 other fast convergent series formulas. 

 Our series recursions and accelerations are also related to the work by 
 \cite{Chu2006} and \cite{ChuZhang2014}. In \cite{ChuZhang2014}, 
 the series rearrangement referred to as the modified Abel lemma on summation by parts was employed to obtain hypergeometric series recursions related to 
 our work. For a hypergeometric series $h(x, y)$ involving the variables $x$ and $y$ in its parameters, we prove recursions such as 
\begin{equation}\label{formofrec}
 h(x, y) = r_{1}(x, y) + r_{2}(x, y) \, h(x, y+1) 
\end{equation}
 for rational functions $r_{1}(x, y)$ and $r_{2}(x, y)$, and we then apply such recursions iteratively, yielding 
 accelerated series formulas. In contrast to \cite{ChuZhang2014}, we do not make use of any Abel-type summation results. Instead, we employ the WZ 
 method. Furthermore, our experimental approaches allow us to obtain recursions for ${}_{p}F_{q}(-1)$-families. It seems that ${}_{p}F_{q}(-1)$-series 
 are not considered in past references such as \cite{ChuZhang2014}. Also, 
 an advantage of our WZ-based methodologies is due to how the WZ difference 
 equation $$ F(n+1, k) - F(n, k) = G(n, k+1) - G(n, k) $$ may be manipulated in order to obtain new and nontrivial hypergeometric identities, using the WZ 
 pairs corresponding to our WZ proofs, together with WZ-based series manipulations as in 
 \cite{Campbell2021}. It appears that the Maple implementation of the WZ method
 cannot be applied to the recursions due to Chu and Zhang in \cite{ChuZhang2014}.
 This shows how the methods in \cite{ChuZhang2014} are nontrivially different from 
 ours, which rely on the WZ method. 

\subsection{Organization of the article}
 The rest of our article is mainly structured and organized in the manner indicated as follows. 
 In each of the following main sections, apart from Section \ref{sectionnoncomp} 
 and the Conclusion, we introduce recursions for families of ${}_{p}F_{q}(x)$-series for fixed $p$, $q$, and $x$. 
 The recurrences are 
 given in terms of the parameters of the generalized hypergeometric functions. We present WZ proofs of these 
 recursions, and then apply these recursions to obtain series acceleration identities, to obtain fast convergent series formulas for fundamental constants. 
 Given one such fast convergent series, 
 our method naturally gives rise to an infinite family, by altering the input parameters. 
 In this regard, we choose to emphasize, as in Sections \ref{subsectionfamilyG} and \ref{GeneralizationsGuillera} below, 
 our infinite families of generalizations of especially remarkable results due to \cite{Lupas2000} 
 and \cite{Guillera2003}. 

 In Section \ref{afterIntro}, we introduce experimentally discovered ${}_{3}F_{2}(1)$-recurrences, 
 yielding formulas for very fast convergent series, such as 
\begin{equation}\label{especially}
 \zeta(2) = \frac{7}{4} - 2\sum_{n=2}^\infty \frac{(3n-1)(7n-5)}{n^2(n-1)^2\binom{2n}{n}^3}, 
\end{equation}
 a series with convergence rate $\frac{1}{64}$. 

 In Section \ref{Sectionfirst3F2n1}, 
 we introduce experimentally discovered ${}_{3}F_{2}(-1)$-recurrences, 
 yielding an infinite family of series generalizing the algorithm used to compute $G$ in the Wolfram Language \cite{MarichevSondowWeisstein}. 

 In Section \ref{section5F4n1}, we introduce an experimentally discovered ${}_{5}F_{4}(-1)$-recurrence, 
 yielding new WZ proofs of two of Ramanujan's series for $\frac{1}{\pi}$, 
 along with the new formula 
\begin{equation}\label{Catalanmotivating}
 G = \frac{1}{16}\sum_{n=1}^{\infty} \frac{2^{6n}(336n^4 - 464n^3 + 216n^2 - 42n + 3)}{ n^3 (2n-1)^3 \binom{4n}{2n}^3}, 
\end{equation}
 which has a convergence rate of $\frac{1}{64}$. 

 In Section \ref{section6F51}, we introduce experimentally discovered ${}_{6}F_{5}(1)$-recurrences, 
 which we apply to obtain a variety of results, including new proofs/generalizations of 
 Guillera's series for $\frac{1}{\pi^2}$. 

 In Section \ref{sectionnoncomp}, we briefly describe 
 the experimental mathematics-based nature about how we had discovered the hypergeometric recurrences proved
 via the WZ method in this article. 

 In Section \ref{sectionConclusion}, we conclude by briefly describing 
 some further areas of research based on the techniques given in this article. 

\section{Experimentally discovered ${}_{3}F_{2}(1)$-recurrences}\label{afterIntro}
 For the sake of brevity, we assume familiarity with WZ theory \cite{PetkovsekWilfZeilberger1996}, generalized hypergeometric series, 
 the Pochhammer symbol, etc. Our first main result is highlighted as Theorem \ref{theoremfirst3F21} below. 

 A number of the hypergeometric recurrences introduced in this article are of the form indicated in 
 \eqref{formofrec}. As below, we are letting the summand of a hypergeometric series $h(x, y)$ as in \eqref{formofrec} 
 be denoted as $\text{summand}(x, y, k)$. 
 This notation will be useful in our generalizing a fast converging series formula 
 for $\frac{1}{\pi^2}$ due to Guillera, as in Section \ref{GeneralizationsGuillera}. 

\begin{lemma}\label{generalizeGuillera}
 If the recurrence in \eqref{formofrec} holds, then the infinite series 
\begin{equation}\label{gGsummand}
 \sum _{k=0}^{\infty} \text{\emph{summand}}(x,y,k) 
\end{equation}
 may be expressed as $$ \sum _{j=0}^{m - 1} \left(\prod _{i=0}^{j-1} r_{2}(x,y+i)\right) r_{1}(x,y+j)+\left(\prod _{i = 
 0}^{m-1} r_{2}(x,y+i)\right) \sum _{k=0}^{\infty} \text{\emph{summand}}(x,y+m,k) $$ for natural numbers $m$, 
 under the assumption that the above series converge, for given values of $x$ and $y$. 
\end{lemma}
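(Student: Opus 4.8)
The plan is to prove the identity by induction on $m$, using the recurrence in \eqref{formofrec} as both the base case and the inductive step. First I would observe that since $h(x,y) = \sum_{k=0}^{\infty} \text{summand}(x,y,k)$ by definition, the claimed formula for $m = 1$ is literally a restatement of \eqref{formofrec}: the sum $\sum_{j=0}^{0}$ contributes the single term $r_{1}(x,y)$ (the empty product $\prod_{i=0}^{-1} r_{2}(x,y+i)$ being $1$), and the trailing term is $\left(\prod_{i=0}^{0} r_{2}(x,y+i)\right) h(x,y+1) = r_{2}(x,y)\, h(x,y+1)$. So the base case is immediate.

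For the inductive step, I would assume the formula holds for some $m \geq 1$ and then apply \eqref{formofrec} once more, with $y$ replaced by $y+m$, to rewrite the tail:
\begin{equation*}
 \sum_{k=0}^{\infty} \text{summand}(x, y+m, k) = r_{1}(x, y+m) + r_{2}(x, y+m)\sum_{k=0}^{\infty} \text{summand}(x, y+m+1, k).
\end{equation*}
Substituting this into the induction hypothesis, the prefactor $\prod_{i=0}^{m-1} r_{2}(x,y+i)$ distributes over the two terms: multiplying it by $r_{1}(x,y+m)$ produces exactly the $j = m$ term of the sum $\sum_{j=0}^{m}$, while multiplying it by $r_{2}(x,y+m)$ extends the product to $\prod_{i=0}^{m} r_{2}(x,y+i)$, which is the correct prefactor for the new tail $\sum_{k=0}^{\infty} \text{summand}(x, y+m+1, k)$. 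Collecting terms yields precisely the claimed formula with $m$ replaced by $m+1$, completing the induction.

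The only genuinely delicate point is the convergence bookkeeping: the manipulations above are purely formal rearrangements of finitely many terms plus one infinite tail, so they are valid as soon as each series appearing — namely $\sum_{k} \text{summand}(x, y+i, k)$ for $0 \le i \le m$ — converges, which is exactly the hypothesis of the lemma. Since only finitely many shifts of $y$ are involved for a fixed $m$, no uniformity or interchange-of-limits issue arises, and the rational prefactors are finite under the stated assumptions. Thus I would simply remark that all rearrangements are legitimate under the convergence hypothesis and that the induction carries through without further analytic subtlety.
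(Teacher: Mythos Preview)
Your induction argument is correct and is exactly a spelled-out version of the paper's one-line proof, which simply says that the identity ``follows in a direct way from repeated applications of \eqref{formofrec}.'' You have just made the iteration explicit, together with the harmless convergence bookkeeping; nothing further is needed.
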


\begin{proof}
 This follows in a direct way from repeated applications of \eqref{formofrec}. 
\end{proof}

 Ideally, we would want that 
\begin{equation*}
 \left(\prod _{i=0}^{m-1} r_{2}(x,y + i)\right) \sum _{k=0}^{\infty} \text{summand}(x,y+m,k) \to 0 
\end{equation*}
 as $m \to \infty$.  Although the above vanishing condition does not necessarily imply acceleration,  we would, ideally, want, from Lemma 
 \ref{generalizeGuillera},  to obtain a series $$ \sum _{j=0}^{\infty} \left(\prod _{i=0}^{j-1} r_{2}(x,y+i)\right) r_{1}(x,y+j) $$ that is new, convergent, 
 and faster convergent relative to \eqref{gGsummand}. 

 By symmetry, given  a recursion such as $h'(x, y) = r_{1}'(x, y) + r_{2}'(x, y) h'(x+1, y)$ 
 for a hypergeometric series $h'$, we may again apply Lemma \ref{generalizeGuillera}, 
 by setting $h(x, y)$ as $h'(y, x)$. 
 We may similarly generalize Lemma \ref{generalizeGuillera} 
 using recurrences of the form 
 $ h''(x, y) = r_{1}''(x, y) + r_{2}''(x, y) \, h''(x, y + \mu) $ for fixed $\mu \in \mathbb{N}$. 
 For the time being, we proceed by providing and proving a new ${}_{3}F_{2}(1)$-recursion of 
 the form indicated in \eqref{formofrec}. 

\begin{theorem}\label{theoremfirst3F21}
 The ${}_{3}F_{2}(1)$-transform 
\begin{align*}
 & {}_{3}F_{2}\!\!\left[ \begin{matrix}x,x+1,1 \vspace{1mm}\\ x+y,x+y+1 \end{matrix} \ \Bigg| \
 {1} \right] = \frac{2x+3y-1}{2(2y-1)}\\ 
 & \hspace{5cm} + \frac{(y-1)y(y+1)}{2(2y-1)(x+y)(x+y+1)}\cdot 
 {}_{3}F_{2}\!\!\left[ \begin{matrix}x,x+1,1 \vspace{1mm}\\ x+y+1,x+y+2 \end{matrix} \ \Bigg| \
 {1} \right]
\end{align*}
 holds true for $x$ and $y$ such that both sides of the above equality converge. 
\end{theorem}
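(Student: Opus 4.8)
The plan is to strip the asserted ${}_{3}F_{2}(1)$-recurrence down to a single telescoping sum in the summation index, and then to prove that sum by exhibiting a Gosper-style antidifference; written as a rational multiple of the summand, this antidifference is essentially the certificate that a WZ/Gosper implementation outputs. First I would write both hypergeometric series out termwise: since $(1)_{k}=k!$, the $k$-th term of the left-hand ${}_{3}F_{2}(1)$ is
\[
 T(y,k) \;=\; \frac{(x)_{k}\,(x+1)_{k}}{(x+y)_{k}\,(x+y+1)_{k}} ,
\]
and the $k$-th term of the right-hand ${}_{3}F_{2}(1)$ is $T(y+1,k)$. Telescoping the Pochhammer quotients $(x+y)_{k}/(x+y+1)_{k}=(x+y)/(x+y+k)$ and $(x+y+1)_{k}/(x+y+2)_{k}=(x+y+1)/(x+y+k+1)$ gives the rational link
\[
 T(y+1,k) \;=\; T(y,k)\cdot\frac{(x+y)(x+y+1)}{(x+y+k)(x+y+k+1)} .
\]
Setting $r_{1}(y)=\dfrac{2x+3y-1}{2(2y-1)}$ and $r_{2}(y)=\dfrac{(y-1)y(y+1)}{2(2y-1)(x+y)(x+y+1)}$, and using $r_{2}(y)(x+y)(x+y+1)=\dfrac{(y-1)y(y+1)}{2(2y-1)}$, I observe that the hypothesis that both sides converge forces $\mathrm{Re}\,y>\tfrac12$ and excludes $y=\tfrac12$, so the two ${}_{3}F_{2}(1)$'s may be combined termwise and the recurrence becomes equivalent to the single-sum identity
\[
 \sum_{k=0}^{\infty} F(y,k) \;=\; r_{1}(y),\qquad F(y,k):=T(y,k)\left(1-\frac{(y-1)y(y+1)}{2(2y-1)(x+y+k)(x+y+k+1)}\right),
\]
where $F(y,k)$ is a hypergeometric term in $k$, i.e.\ $F(y,k+1)/F(y,k)\in\mathbb{Q}(x,y,k)$.

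The nontrivial input, which is what the experimental search effectively uncovered, is that this $F(y,k)$ is Gosper-summable. Granting that, Gosper's algorithm returns an explicit rational function $R(y,k)$ such that, with $G(y,k):=R(y,k)\,F(y,k)$, the identity
\[
 F(y,k) \;=\; G(y,k)-G(y,k+1)
\]
holds in $\mathbb{Q}(x,y,k)$; verifying it is just a mechanical clearing of denominators, and this is where the ``hugely complicated'' certificates the authors describe make the bookkeeping long though routine. Summing over $k\ge0$ then telescopes the right-hand side to $G(y,0)-\lim_{k\to\infty}G(y,k)$: the explicit form of $R$ gives $G(y,0)=r_{1}(y)$, while $\lim_{k\to\infty}G(y,k)=0$ because $|T(y,k)|=O(k^{-2\,\mathrm{Re}\,y})$ and $R(y,k)$ grows only polynomially, which forces the product to vanish on the convergence region (after a harmless restriction if $\deg_{k}R$ demands one). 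This yields $\sum_{k\ge0}F(y,k)=r_{1}(y)$, hence the theorem on an open subset of the stated domain; since both sides are analytic in $x$ and in $y$ throughout their common region of convergence, the identity propagates to the full region by analytic continuation. Equivalently, one may keep the two-variable form and hand it to a WZ implementation directly, producing a WZ pair $(F,G)$ with $F(y+1,k)-F(y,k)=G(y,k+1)-G(y,k)$; the reduction above is simply the tidiest repackaging of the same data.

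I expect the only genuine obstacle to be computational: producing and normalizing the certificate $R(y,k)$, and then confirming $F(y,k)=G(y,k)-G(y,k+1)$ as an identity of rational functions in three variables. The one delicate analytic point is the vanishing of the $k=\infty$ boundary term, which needs $\deg_{k}R$ to be small enough that the polynomial factor does not overwhelm the $O(k^{-2\,\mathrm{Re}\,y})$ decay of $T(y,k)$ near the edge of the convergence half-plane; should this fail on part of the region, the analytic-continuation step repairs it. The apparent poles at $y=\tfrac12$ in $r_{1}$ and $r_{2}$ cause no difficulty, since the left-hand ${}_{3}F_{2}(1)$ does not converge there and that value is excluded by hypothesis anyway.
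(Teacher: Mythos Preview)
Your approach is correct but genuinely different from the paper's. The paper does \emph{not} telescope the infinite sum directly: instead it substitutes $x=-n$, so that $(-n)_{k}$ vanishes for $k>n$ and both ${}_{3}F_{2}(1)$'s become finite sums; it then applies the WZ method to the resulting finite identity (with free parameter $n$), and finally invokes Carlson's theorem to pass from $n\in\mathbb{N}_{0}$ back to general $x$. Your route keeps $x,y$ generic throughout, runs Gosper on the infinite sum, and replaces Carlson's theorem by the analysis of the boundary term $G(y,k)\to0$ plus analytic continuation. Interestingly, your argument is essentially the ``discovery'' computation the authors describe later in Section~\ref{sectionnoncomp}: the rational identity recorded there, with $f(x,y,k)=\dfrac{2k+2x-y+1}{2(2y-1)}$, yields exactly your antidifference $G(y,k)=(1+f(x,y,k))\,T(y,k)$, and one checks $G(y,0)=r_{1}(y)$ and $G(y,k)=O(k^{1-2\,\mathrm{Re}\,y})\to0$ on the whole convergence region $\mathrm{Re}\,y>\tfrac12$, so no analytic-continuation patch is even needed. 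The trade-off is that the paper's finite-sum/Carlson device sidesteps all convergence bookkeeping at the cost of an extra classical theorem, whereas your direct Gosper argument is more self-contained but obliges you to control the $k\to\infty$ boundary term; here the certificate is linear in $k$, so that control is immediate.
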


\begin{proof}
 We set $x = -n$. So, it remains to prove that 
\begin{align*}
 & \sum_{k = 0}^{\infty} \frac{(1-n)_k (-n)_k}{(y-n)_k (-n+y+1)_k} \cdot \\ 
 & \frac{k^2 (2-4 y)+k \left(n (8 y-4)-8 y^2+2\right)+n^2 (2-4 y)+n \left(8 y^2-2\right)-3 y^3-2 y^2+y}{(2 n-3 y+1) (k-n+y) (k-n+y+1)} 
 \end{align*}
  constantly equals $1$.   Now, we restrict $n$ to being a member of the set $\mathbb{N}_{0}$.  
 In this case, Pochhammer symbols of the form $(-n)_{k}$ vanish for $k > n$, with $k \in \mathbb{N}_{0}$. 
 So, for $n \in \mathbb{N}_{0}$, we may replace the upper limit of the above series with $n$. 
 Now, we may apply the WZ method, giving us a WZ proof certificate
 that is given explicitly in the 
 first draft available via the \url{https://hal.archives-ouvertes.fr/hal-03659300/} site. 
 So, using the WZ method, we have shown that the above infinite series identity holds for $n \in \mathbb{N}_{0}$. We may thus apply 
 Carlson's theorem \cite[p.\ 39]{Bailey1935} 
 as in \cite{EkhadZeilberger1994} 
 to show that the same series identity holds for real $n$. 
\end{proof}

\begin{theorem}\label{notlemma}
 The ${}_{3}F_{2}(1)$-transform 
\[
{}_{3}F_{2}\!\!\left[ \begin{matrix}x,x+1,1 \vspace{1mm}\\ x+y,x+y+1 \end{matrix} \ \Bigg| \
 {1} \right] = 1 + \frac{x(x+1)}{(x+y)(x+y+1)}\cdot {}_{3}F_{2}\!\!\left[ \begin{matrix}x+1,x+2,1 \vspace{1mm}\\ x+y+1,x+y+2 \end{matrix} \ \Bigg| \
 {1} \right]
\]
 holds true for $x$ and $y$ such that both sides of the above equality converge. 
\end{theorem}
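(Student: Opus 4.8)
The plan is to prove this directly from the defining series, sidestepping the WZ machinery used for Theorem~\ref{theoremfirst3F21}, since the identity turns out to be essentially a shift of summation index. The key preliminary observation is that the upper parameter $1$ cancels the factorial appearing in the hypergeometric term, so that
\[
 {}_{3}F_{2}\!\!\left[ \begin{matrix}x,x+1,1 \vspace{1mm}\\ x+y,x+y+1 \end{matrix} \ \Bigg| \
 {1} \right] = \sum_{k=0}^{\infty} \frac{(x)_{k}\,(x+1)_{k}}{(x+y)_{k}\,(x+y+1)_{k}},
\]
and likewise the ${}_{3}F_{2}(1)$ on the right-hand side equals $\sum_{k\ge 0} (x+1)_{k}(x+2)_{k}/\bigl((x+y+1)_{k}(x+y+2)_{k}\bigr)$.

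Next I would split off the $k=0$ term of the left-hand sum, which equals $1$, leaving $\sum_{k\ge 1}$. In that tail I would substitute $k = j+1$ and apply the Pochhammer shift $(a)_{j+1} = a\,(a+1)_{j}$ to each of the four Pochhammer symbols: the two numerator shifts contribute the factors $x$ and $x+1$, the two denominator shifts contribute $x+y$ and $x+y+1$, and what remains of the summand is precisely $(x+1)_{j}(x+2)_{j}/\bigl((x+y+1)_{j}(x+y+2)_{j}\bigr)$. Summing over $j \ge 0$ then reproduces $\frac{x(x+1)}{(x+y)(x+y+1)}$ times the right-hand ${}_{3}F_{2}(1)$, which together with the peeled-off $1$ gives the claimed identity. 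No rearrangement of terms is needed, only a reindexing within a single convergent series, so the stated convergence hypothesis suffices to justify every step.

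Consequently I do not expect any genuine obstacle; the only thing requiring a little care is carrying out the four Pochhammer shifts simultaneously and verifying that the denominator factors $(x+y)(x+y+1)$ emerge as part of the prefactor rather than being absorbed into the shifted summand. If a uniform presentation were desired, one could alternatively set $x=-n$ and invoke the WZ algorithm together with Carlson's theorem exactly as in the proof of Theorem~\ref{theoremfirst3F21}, but the elementary argument above is shorter and makes the underlying telescoping transparent.
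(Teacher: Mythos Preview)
Your argument is correct. Since $(1)_{k}/k!=1$, the left-hand series is $\sum_{k\ge 0}(x)_{k}(x+1)_{k}/\bigl((x+y)_{k}(x+y+1)_{k}\bigr)$; peeling off the $k=0$ term and applying $(a)_{j+1}=a\,(a+1)_{j}$ to each of the four Pochhammer factors in the tail yields exactly the right-hand side. No analytic subtlety arises beyond the assumed convergence.

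This is a genuinely different route from the paper's proof. The paper sets $x=-n$, reduces to a terminating identity, produces a WZ certificate, and then invokes Carlson's theorem to pass from $n\in\mathbb{N}_{0}$ to general $x$. Your approach bypasses all of this: the identity in Theorem~\ref{notlemma} is, as you observe, nothing more than the index shift $k\mapsto k+1$, so the WZ machinery and the analytic continuation via Carlson are unnecessary here. What the paper's approach buys is uniformity---the same template (substitute $x=-n$, apply WZ, extend by Carlson) is used for every recurrence in the article, including those such as Theorem~\ref{theoremfirst3F21} or Theorem~\ref{againnotlemma} that do \emph{not} reduce to a bare reindexing. What your approach buys is transparency and brevity for this particular statement, and it makes clear that Theorem~\ref{notlemma} (equivalently recurrence~\eqref{eq:2}) is structurally trivial compared with its companion Theorem~\ref{theoremfirst3F21}.
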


\begin{proof}
  Again, we set $x = -n$. So, it remains to prove that   $$ \sum_{k = 0}^{\infty}   \frac{(1-n)_k (-n)_k}{(y-n)_k (-n+y+1)_k}   \frac{y (2 k-2 n+y+1)}{(k 
  -n+y) (k-n+y+1)} = 1. $$   Again, we consider the case for $n \in \mathbb{N}_{0}$,  so that we may again replace the upper parameter with $n$.    
 Again, we may apply the WZ method.  The corresponding WZ proof certificate is given below:  
 $$ \frac{k (k-n+y) (k-n+y+1) \left(-2 k n+k y-k+2 n^2-2 n y+2 n-y+1\right)}{(k-n-1) (k-n) (n-y) (n-y+1) (2 k - 
 2 n+y+1)}. $$
 We may thus apply Carlson's theorem, as in our proof of Theorem \ref{theoremfirst3F21}. 
\end{proof}

\subsection{A series acceleration method}\label{subsseriesaccel}
 Let us define $$ s(x,y)=\frac{x}{x+y}\cdot {}_{3}F_{2}\!\!\left[ \begin{matrix}x,x+1,1 
 \vspace{1mm}\\ x+y,x+y+1 \end{matrix} \ \Bigg| \
 {1} \right] = \sum_{n=0}^\infty\frac{(x)_n (x)_{n+1}}{(x+y)_n (x+y)_{n + 
 1}} \ . $$ We rewrite the recurrences given as Theorems \ref{theoremfirst3F21} and \ref{notlemma}, respectively, so that 
\begin{equation}
s(x,y) = \frac{x(2x+3y-1)}{2(2y-1)(x+y)} + \frac{(y-1)y(y+1)}{2(2y-1)(x+y)^2}s(x,y+1)
\label{eq:1}
\end{equation}
and that 
\begin{equation}\label{eq:2}
s(x,y) = \frac{x}{x+y} + \frac{x^2}{(x+y)^2}s(x+1,y) .
\end{equation}
Note that we know the value of $s(x,y)$ in some cases: 
\begin{equation}\label{svalues}
 s(1,2)= 7 - \frac{2}{3}\pi^2, s(\tfrac{1}{2},2)= 3 - \frac{9}{32}\pi^2, s(1,\tfrac{3}{2})=10 - 3\pi, s(\tfrac{1}{2},\tfrac{3}{2})=2 - \frac{16}{3\pi}. 
\end{equation}
 Indeed, we have, for instance, that: $$ s(1,2) = \sum_{n=0}^\infty \frac{4}{(n + 3) (n + 2)^2 (n + 1)} = \sum_{n=0}^\infty \left(\frac{2}{n + 
 1}-\frac{2}{n+3} -\frac{4}{(n + 2)^2}\right), $$ leading to the given closed form for 
 $s(1, 2)$. 
 We can deal with the second one in a similar manner. For the third and 
 fourth ones, 
 we can use a closed form for the partial sum of the series, as in: $$ s(\tfrac{1}{2},\tfrac{3}{2}) = \sum_{n = 1}^\infty \frac{2}{(n+1)(2 n - 
 1)}\frac{\binom{2n}{n}^2}{2^{4n}} \ \ \ \mbox{with} \ \ \ \sum_{n=1}^{k-1} \frac{2}{(n+1)(2n-1)}\frac{\binom{2n}{n}^2}{2^{4n}} = 2 - 
 \frac{8}{3} \frac{k(4k-1)}{2k-1}\frac{\binom{2k}{k}^2}{2^{4k}}. $$ We then take the limit and use Wallis's product formula for $\pi$. 

 We can use the $s$-recurrences above to accelerate the convergence of the corresponding series. Iterating the first recurrence leads to the following 
 accelerated series, via Lemma \ref{generalizeGuillera}: 
 \[ s(x,y)=\sum_{n=0}^\infty \frac{x(2x+3(y+n)-1)}{2(2(y+n)-1)(x+y+n)} \frac{(y-1)_n (y)_n (y+1)_n}{2^{2n}(y-\frac{1}{2})_n (x 
 + y)^2_n} . \] 
 For the four special values given above, we find the following four series: 
\begin{align}
 \zeta(2) & = \frac{7}{4} - \sum_{n=2}^\infty \frac{3n+1}{(n+1)n^2(n-1)\binom{2n}{n}}, \nonumber \\ 
 \zeta(2) & = \frac{16}{9} - \sum_{n=2}^\infty \frac{2^{4n}}{(2n+1)n(n-1)\binom{2n}{n}^3}, \label{Basel16} \\ 
\pi & = -2\sum_{n=0}^\infty \left( 2^{-4} \right)^{n}\binom{2n}{n}\frac{6n+5}{(2n+3)(2n+1)(2n-1)}, \nonumber \\ 
\frac{1}{\pi} & = - 12 \sum_{n=0}^\infty \left( 2^{-8} \right)^{n}\binom{2n}{n}^3\frac{n^2}{(2n-1)(2n-3)}. \nonumber 
\end{align}
 The fast convergent series evaluation in \eqref{Basel16} is closely related to the series for $\pi^2$ from \cite{Campbell2022AMEN} shown in Table 
 \ref{table:1}, noting the convergence rate of $\frac{1}{4}$ in both cases. 

 If we use \eqref{eq:2} in \eqref{eq:1}, we get a recurrence relating $s(x,y)$ to $s(x+1,y+1)$, and other accelerated sums, including \eqref{especially}. 
 This series is especially remarkable for many reasons, perhaps most notably due to its very fast convergence rate of $\frac{1}{64}$. In the same manner 
 that we have obtained \eqref{especially}, the following formulas may be proved: 
\begin{align*}
 \zeta(2) & = \frac{16}{9} - 2\sum_{n=1}^\infty \frac{2^{4n}(14n^2+11n + 
 1)(n+1)}{n (4n+3)(4n+1)^2(2n-1)\binom{2n}{n}\binom{4n}{2n}^2}, \\ 
 \pi & = \frac{10}{3} -2\sum_{n=1}^\infty \frac{(14n+11)(2n + 
 1)(3n+1)\binom{2n}{n}}{(4n+3)(4n+1)^2(2n-1)\binom{4n}{2n}^2}, \\ 
 \frac{1}{\pi} 
 & = - \frac{1}{8} \sum_{n=0}^\infty\left( 2^{-12} \right)^{n}\binom{2n}{n}^3 \frac{(6 n + 
 1)(14n-3)(2n+1)}{(2n-1)^2}. 
\end{align*}
 This leads to an infinite number of possibilities. 

\section{Experimentally discovered ${}_{3}F_{2}(-1)$-recurrences}\label{Sectionfirst3F2n1}
 In view of the closed forms shown in \eqref{svalues}, it is not clear as to how to obtain accelerated series for Catalan's constant $G = \sum _{n = 
 0}^{\infty } \frac{(-1)^n}{(2 n+1)^2}$. This motivates the exploration of alternating variants and generalizations Theorems \ref{theoremfirst3F21} 
 and \ref{notlemma}. Defining $t(x, y)$ so that 
\begin{equation}\label{txydefinition}
 t(x,y) = \sum_{n=0}^\infty \frac{(x)_n (x)_{n+1}}{(x+y)_n (x+y)_{n+1}} (-1)^n, 
\end{equation}
 we find that $t\big(\frac{1}{2}, 2\big) = \frac{3}{8} (6 G-5)$. Note that
 $$ t(x,y)=\frac{x}{x+y}\cdot {}_{3}F_{2}\!\!\left[ \begin{matrix}x,x+1,1 \vspace{1mm}\\ x+y,x+y+1 \end{matrix} \ \Bigg| \
 {-1} \right]. $$ 
 This motivates the use of analogues of our above acceleration methods, in order to obtain accelerated series for $G$. 

 The recurrence highlighted below is remarkable in the sense that it is far from clear as to how our proofs for the preceding Theorems 
 could be mimicked or reformulated so as to prove the identity highlighted as Theorem \ref{againnotlemma}. 

\begin{theorem}\label{againnotlemma}
 The ${}_{3}F_{2}(-1)$-recurrence 
\begin{equation}
 t(x,y) = \frac{x(2x^2+6xy+5y^2+2x+4y)}{4(x+y)^2(x+y+1)} - \frac{y^2(y+1)(y+2)}{4(x+y)^2(x+y+1)^2}t(x,y+2)
 \label{eq:1b}
\end{equation}
 holds true for suitably bounded $x$ and $y$. 
\end{theorem}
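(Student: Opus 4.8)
The plan is to mimic the structure of the proofs of Theorems~\ref{theoremfirst3F21} and~\ref{notlemma}, exploiting the fact that the recurrence is an identity between hypergeometric series in the parameters $x$ and $y$, and that such identities can be verified on the integer lattice and then extended by Carlson's theorem. First I would substitute $x = -n$ with $n \in \mathbb{N}_{0}$ so that the defining series for $t(-n, y)$, $t(-n, y+2)$ terminate: the Pochhammer factor $(-n)_{k}$ kills all terms with $k > n$, reducing both infinite sums to finite sums. Rearranging \eqref{eq:1b} into the form ``a single terminating $_{3}F_{2}(-1)$-type sum equals $1$'' requires combining the two series over a common summation variable $k$; here one must be careful that the shift $y \mapsto y+2$ changes the Pochhammer parameters in the denominator, so the two summands do not share an obvious common term ratio. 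After clearing denominators and writing everything over the summand $\frac{(1-n)_{k}(-n)_{k}}{(y-n)_{k}(y-n+1)_{k}}(-1)^{k}$ (or a closely related normalization), the claim becomes $\sum_{k=0}^{n} F(n,k) = 1$ for an explicit hypergeometric term $F$.

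Next I would feed this terminating identity to the WZ machinery: produce a companion $G(n,k)$ and rational certificate $R(n,k) = G(n,k)/F(n,k)$ satisfying the WZ equation $F(n+1,k) - F(n,k) = G(n,k+1) - G(n,k)$, then sum over $k$ and check the boundary terms vanish, together with the base case $n = 0$. In practice this certificate is found by Gosper's algorithm / the Maple \texttt{Zeilberger} or \texttt{WZ} routines; given the complexity of the rational functions in \eqref{eq:1b} (a quadratic $2x^2 + 6xy + 5y^2 + 2x + 4y$ in the numerator and the cubic $y^2(y+1)(y+2)$), I expect the certificate to be lengthy, and I would simply record it (or cite the \texttt{hal} preprint, as the authors do for Theorem~\ref{theoremfirst3F21}) rather than display it inline. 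The verification that the WZ pair satisfies the difference equation is then a routine rational-function identity check.

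Finally, having established the recurrence for all $n \in \mathbb{N}_{0}$, i.e. for $x \in \{0, -1, -2, \dots\}$, I would invoke Carlson's theorem \cite[p.~39]{Bailey1935} exactly as in \cite{EkhadZeilberger1994}: both sides of \eqref{eq:1b}, viewed as functions of $x$ for fixed generic $y$, are analytic and of suitable exponential growth in the right half-plane (this is where the hypothesis ``suitably bounded $x$ and $y$'' does its work, guaranteeing convergence and the growth bound), so agreement on the nonpositive integers forces agreement for all admissible real (indeed complex) $x$. By symmetry of the argument one also gets the range of valid $y$.

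I expect the main obstacle to be the first step: finding the right normalization so that the difference of the two $_{3}F_{2}(-1)$-series collapses into a \emph{single} hypergeometric sum amenable to Gosper/Zeilberger. Because the denominator parameters differ by $2$ between $t(x,y)$ and $t(x,y+2)$, naive term-by-term subtraction does not yield a hypergeometric term; one must either re-index one of the sums or multiply through by the appropriate ratio of Pochhammer symbols to force a common summand. Once that reduction is carried out correctly, the WZ certification and the Carlson-theorem extension are both mechanical, mirroring the earlier proofs in this section.
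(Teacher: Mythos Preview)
Your proposal is correct and follows essentially the same route as the paper: set $x=-n$, collapse \eqref{eq:1b} to a single terminating sum equal to $1$, certify via the WZ method (the paper likewise defers the explicit certificate to the HAL draft), and then extend by Carlson's theorem.

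One remark on what you flag as the ``main obstacle'': it is not really one. Since
\[
\frac{(x+y)_k(x+y)_{k+1}}{(x+y+2)_k(x+y+2)_{k+1}}
=\frac{(x+y)^2(x+y+1)^2}{(x+y+k)(x+y+k+1)^2(x+y+k+2)}
\]
is rational in $k$, the $k$-th summand of $t(x,y+2)$ is a rational multiple of the $k$-th summand of $t(x,y)$, so the combined expression is automatically a single hypergeometric term; no reindexing is needed. The paper simply writes down the resulting summand (with $x=-n$) and proceeds.
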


\begin{proof}
 Rewrite $x$ as $-n$. So, it remains to show that $$ \sum_{k=0}^{\infty} \frac{(-1)^k (-n)_k (-n)_{k + 1}}{(-n+y)_k (-n+y)_{1+k}} \left( 
 -\frac{(n-y)^2 (-n+y+1) \left(\frac{(y+1) (y+2) y^2}{(k-n+y) (k-n+y+1)^2 (k-n+y+2)}+4\right)}{n \left(-6 n y+2 (n-1) n+5 y^2+4 y\right)} \right)$$ 
 constantly equals $1$. In line with our previous WZ proofs, we temporarily restrict our attention to the case whereby $n \in \mathbb{N}_{0}$. 
 Again, the vanishing of $(-n)_{k}$ for $k > n \in \mathbb{N}_{0}$ produces a finite sum. 
 A WZ proof certificate for this finite sum is given in the first article draft available via the HAL site previously indicated. 
 Once again, we may apply Carlson's theorem. 
\end{proof}

 It is common to denote quotients of Pochhammer symbols so that 
 $$ \!\!\left[ \begin{matrix} \alpha_{1}, \ldots, \alpha_{p} \vspace{1mm} \\ 
 \beta_{1}, \ldots, \beta_{q} \end{matrix} \right]_{n} = 
 \frac{ \left( \alpha_{1} \right)_{n} \cdots \left( \alpha_{p} \right)_{n} }{ 
 \left( \beta_{1} \right)_{n} \cdots \left( \beta_{q} \right)_{n} }, $$ 
 as in the series acceleration identity shown below. 

\begin{theorem}\label{theoremtxy}
 The expression $t(x, y)$ equals the accelerated infinite series 
\begin{align*}
 \sum_{j=0}^{\infty} 
 & \left( -2^{-2} \right)^{j} 
 \frac{x y^2 (y+1) (y+2) \left(6 x (2 j+y)+5 (2 j+y)^2+4 (2 j+y)+2 x^2+2 x\right)}{4 (x + 
 y)^2 (x+y+1)^2 (2 j+x+y)^2 (2 j+x+y+1)} \cdot \\ 
 & \!\!\left[ \begin{matrix} \frac{y}{2}+1, \frac{y}{2}+1, \frac{y}{2}+2, \frac{y+3}{2} \vspace{1mm} \\ 
 \frac{x+y}{2} + 1, \frac{x+y}{2} + 1, 
 \frac{x+y+3}{2}, \frac{x+y+3}{2} \end{matrix} \right]_{j-1} 
 \end{align*}
 if both sides converge for $x$ and $y$. 
\end{theorem}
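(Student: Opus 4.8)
The plan is to apply Lemma \ref{generalizeGuillera} to the recurrence of Theorem \ref{againnotlemma}, in exactly the same way that the $s$-recurrences were iterated in Section \ref{subsseriesaccel}. First I would observe that equation \eqref{eq:1b} is of the generalized form $h''(x,y) = r_1''(x,y) + r_2''(x,y)\, h''(x,y+\mu)$ with $\mu = 2$, where $h''(x,y) = t(x,y)$,
\[
 r_1(x,y) = \frac{x(2x^2+6xy+5y^2+2x+4y)}{4(x+y)^2(x+y+1)}, \qquad
 r_2(x,y) = -\frac{y^2(y+1)(y+2)}{4(x+y)^2(x+y+1)^2};
\]
as noted after the proof of Lemma \ref{generalizeGuillera}, the lemma applies verbatim to such $\mu$-shifted recurrences (one simply replaces $y+i$ by $y+\mu i$ and $y+m$ by $y+\mu m$ in the statement). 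Iterating \eqref{eq:1b} therefore yields
\[
 t(x,y) = \sum_{j=0}^{m-1}\Bigl(\prod_{i=0}^{j-1} r_2(x,y+2i)\Bigr) r_1(x,y+2j) + \Bigl(\prod_{i=0}^{m-1} r_2(x,y+2i)\Bigr) t(x,y+2m),
\]
and letting $m\to\infty$ under the stated convergence hypothesis gives $t(x,y) = \sum_{j=0}^{\infty}\bigl(\prod_{i=0}^{j-1} r_2(x,y+2i)\bigr) r_1(x,y+2j)$, provided the tail product times $t(x,y+2m)$ tends to $0$.

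The remaining work is to identify the summand $\bigl(\prod_{i=0}^{j-1} r_2(x,y+2i)\bigr) r_1(x,y+2j)$ with the closed-form expression in the statement. I would compute the product $\prod_{i=0}^{j-1} r_2(x,y+2i)$ explicitly: the factor $(-4^{-1})$ raised to the $j$-th power comes out front as $(-2^{-2})^j$, and each of the polynomial factors $y+2i$, $y+1+2i$, $y+2+2i$ in the numerator of $r_2$, and $(x+y+2i)^2$, $(x+y+1+2i)^2$ in the denominator, telescopes into a ratio of Pochhammer symbols once one rewrites $y + 2i = 2(\tfrac{y}{2} + i)$, $y+1+2i = 2(\tfrac{y+1}{2}+i) = 2(\tfrac{y+3}{2} + (i-1))$, $y+2+2i = 2(\tfrac{y}{2}+1+i)$, and similarly for the $x+y$ terms. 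This is what produces the bracket symbol $\bigl[\,\tfrac{y}{2}+1,\tfrac{y}{2}+1,\tfrac{y}{2}+2,\tfrac{y+3}{2};\ \tfrac{x+y}{2}+1,\tfrac{x+y}{2}+1,\tfrac{x+y+3}{2},\tfrac{x+y+3}{2}\,\bigr]_{j-1}$, with the index shifted to $j-1$ because of how the shifted starting points absorb one factor; the powers of $2$ from all the doublings in numerator and denominator cancel against one another (six in the numerator from the three linear factors, but read off as $2^{3j}$, against $2^{4j}$-worth in the denominator, contributing an extra $2^{-j}$ that merges into the $(-2^{-2})^j$ after care), and the leftover rational prefactor involving $x y^2(y+1)(y+2)$ and $(x+y)^2(x+y+1)^2(2j+x+y)^2(2j+x+y+1)$ is precisely $r_1(x,y+2j)$ multiplied by the $j=0$ term of the product normalization. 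I would verify this bookkeeping by checking the $j=0$ and $j=1$ terms against \eqref{eq:1b} directly.

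The main obstacle I anticipate is purely the index/shift bookkeeping in matching the telescoped product to the Pochhammer bracket at subscript $j-1$ rather than $j$, together with correctly accounting for all the powers of $2$ generated by the substitutions $y+2i = 2(\tfrac{y}{2}+i)$ etc.; a sign error or an off-by-one in the Pochhammer index is the most likely pitfall, and the cleanest safeguard is to expand both the claimed summand and the iterated recurrence for $j \in \{0,1,2\}$ symbolically and confirm agreement. A secondary point requiring a remark is the convergence/vanishing hypothesis: one should note that, for the parameter values of interest (in particular $x = \tfrac12$, $y = 2$, giving Catalan's constant via $t(\tfrac12,2) = \tfrac38(6G-5)$), the factor $\prod_{i=0}^{m-1} r_2(x,y+2i)$ decays geometrically like $(\tfrac14)^m$ times a ratio of Gamma functions that is bounded, while $t(x,y+2m)$ stays bounded, so the tail genuinely vanishes and the passage to $m = \infty$ is legitimate — this is exactly the "ideally we would want" scenario described after Lemma \ref{generalizeGuillera}, and here it is actually realized.
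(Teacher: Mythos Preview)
Your approach is correct and essentially identical to the paper's: iterate Theorem~\ref{againnotlemma} repeatedly (i.e., apply the $\mu=2$ version of Lemma~\ref{generalizeGuillera}), let $m\to\infty$ so the remainder $\prod r_2 \cdot t(x,y+2m)$ vanishes, and then rewrite the resulting product $\prod_{i=0}^{j-1} r_2(x,y+2i)\cdot r_1(x,y+2j)$ in Pochhammer-bracket form. The paper's proof is in fact terser than yours---it writes out the iterated product explicitly and simply asserts the tail vanishes---so your added bookkeeping discussion (the $j\mapsto j-1$ index shift, the power-of-$2$ cancellation, and the decay check) fills in details the paper leaves implicit; note only that the doubling substitutions contribute $2^{4j}$ in both numerator and denominator and hence cancel exactly, so no stray $2^{-j}$ arises.
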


\begin{proof}
 Through repeated applications of the recurrence given as Theorem \ref{againnotlemma}, we may rewrite $t(x, y)$ as 
\begin{align*}
 & \sum _{j=0}^{m-1} \frac{(-1)^j \left(\prod _{i=0}^{j-1} \frac{(2 i+y)^2 (1+2 i+y) (2+2 i+y)}{4 (2 i+x+y)^2 (1+2 i+x+y)^2}\right) 
x \left(2 x+2
 x^2+4 (2 j+y)+6 x (2 j+y)+5 (2 j+y)^2\right)}{4 (2 j+x+y)^2 (1+2 j+x+y)} \\ 
 & + (-1)^m \left(\prod _{i=0}^{m-1} \frac{(2 i+y)^2 (1+2 i+y) (2+2 i+y)}{4 (2 i+x+y)^2 (2 i+x + y + 1)^2}\right) t(x,y+2 m). 
\end{align*}
 Taking $m \to \infty$, it is easily seen that the latter term vanishes. 
\end{proof}

\begin{example}
 Setting $x = \frac{1}{2}$ and $y = 1$ in Theorem \ref{theoremtxy}, we obtain the accelerated series identity $$ 8-4 \pi = \sum _{j=1}^{\infty} 
 \frac{ \left(-2^{4} \right)^j \left(40 j^2-12 j-1\right)\binom{2 j}{j} }{ j (2 j-1)^2 (4 j+1)\binom{4 j}{2 j}^2 }. $$ The above series is of 
 convergence rate $\frac{1}{4}$. 
\end{example}

\subsection{A family of accelerated series for $G$}\label{subsectionfamilyG}
 By setting $x =- \frac{1}{2}$ and $y = 2$ in Theorem \ref{theoremtxy}, 
 or by setting $x = \frac{1}{2}$ and $y = 2$ in Theorem \ref{theoremtxy}, 
 in either case, this gives us 
 an equivalent 
 version of the series used to calculate $G$ in the Wolfram Language \cite{MarichevSondowWeisstein}.
 For example, in the latter case, we obtain that 
\begin{equation}\label{alternateLupasWolfram}
 \frac{40}{3}-16 G 
 = \sum _{j=1}^{\infty} \frac{ \left(-2^{8}\right)^j \left(40j^2+28 j+3\right)}{j^2 (4 j+1)^2 (4 j+3) \binom{2 j}{j} \binom{4 j}{2 j}^2}, 
\end{equation}
 and adding 16 times the series given by the second formula for $G$ in Table \ref{table:1} gives us a series with partial sums that are evaluable in 
 closed form. 

\begin{example}
 Set $x = \frac{3}{2}$ and $y = 2$ in Theorem \ref{theoremtxy}. This gives us 
 $$ \frac{16 G}{3}-\frac{664}{135} 
 = \sum_{j=1}^{\infty} \frac{ \left(-2^{8}\right)^j \left(40 j^2+52 j+15\right)}{j^2 (4 j+1)^2 (4 j+3)^2 (4 j+5) \binom{2 j}{j} \binom{4 j}{2 j}^2}. $$
\end{example}

\begin{example}
 Set $x = \frac{5}{2}$ and $y = 2$ in Theorem \ref{theoremtxy}. This gives us that 
 $$ \frac{23048}{70875}-\frac{16 G}{45}
 = \sum_{j = 1}^{\infty} 
 \frac{ \left( -2^{8} \right)^j \left(40 j^2+76 j+35\right)}{j^2 (4 j+1)^2 (4 j+3)^2 (4 j+5)^2 (4 j+7) \binom{2 j}{j} \binom{4 j}{2 j}^2}. $$ 
\end{example}
 
 Continuing in this manner gives us an infinite family of extensions of \eqref{alternateLupasWolfram}. It appears that our techniques for generating 
 such families of accelerated series have not been considered previously, as in past literature related 
 to \cite{HessamiPilehroodHessamiPilehrood2010}, including the work by 
 \cite{AdellLekuona2016,AdellLekuona2015,HessamiPilehroodHessamiPilehrood2011,HessamiPilehroodHessamiPilehroodTauraso2012,HessamiPilehroodHessamiPilehroodTauraso2014,WangXu2021}. 

\subsection{Further ${}_{3}F_{2}(-1)$-recurrences}

\begin{theorem}\label{theoremfurther3F2n1}
 The ${}_{3}F_{2}(-1)$-recurrences 
\begin{align*} 
 & {}_{3}F_{2}\!\!\left[ \begin{matrix}x,x,1 \vspace{1mm}\\ x+y,x+y \end{matrix} \ \Bigg| \
 {-1} \right] = 
 1- \frac{x^2}{(x + y)^2} \cdot {}_{3}F_{2}\!\!\left[ \begin{matrix}x+1,x+1,1 \vspace{1mm}\\ x+1+y,x+1 + y \end{matrix} \ \Bigg| \
 {-1} \right] \\ 
 & {}_{3}F_{2}\!\!\left[ \begin{matrix}x,x,1 \vspace{1mm}\\ x+y,x+y \end{matrix} \ \Bigg| \
 {-1} \right] = \frac{2x^2 + 6xy + 5y^2 + y}{4(x + y)^2} \\ 
 & \hspace{5cm}-\frac{y(y + 1)^3}{4(x + y)^2(x + y + 1)^2}\cdot {}_{3}F_{2}\!\!\left[ \begin{matrix}x,x,1 \vspace{1mm}\\ x+y+2,x+y+2 \end{matrix} \ \Bigg| \
 {-1} \right]
\end{align*}
 hold true, if all of the above series converge. 
\end{theorem}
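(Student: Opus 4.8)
The plan is to follow the same template used for Theorems~\ref{theoremfirst3F21}, \ref{notlemma}, and \ref{againnotlemma}, applying the WZ method twice---once for each of the two recurrences stated---since both are of the form $h(x,y) = r_1(x,y) + r_2(x,y)\,h(x,y+\mu)$ (with $\mu = 1$ for the first and $\mu = 2$ for the second), with $h$ the ${}_{3}F_{2}(-1)$-series having numerator parameters $x,x,1$ and denominator parameters $x+y,x+y$. First I would substitute $x = -n$, so that the recurrences become claims that certain ${}_{3}F_{2}(-1)$-type sums in $n$ and $y$ are identically equal to $1$. Explicitly, moving everything to one side, I rewrite each recurrence as a statement that $\sum_{k=0}^{\infty} F(n,k) = 1$, where $F(n,k)$ is the product of $\frac{(-1)^k (-n)_k (-n)_k}{(y-n)_k (y-n)_k}$ with the appropriate rational function of $n$, $y$, and $k$ coming from clearing the denominators $(x+y)^2$ (resp.\ $(x+y)^2(x+y+1)^2$) and incorporating the shifted summand.

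Next, I restrict $n \in \mathbb{N}_0$. Since $(-n)_k = 0$ for $k > n$, each infinite sum truncates to a finite sum over $0 \le k \le n$, which is exactly the setting in which Zeilberger's algorithm / the WZ method applies directly. Running the WZ method (e.g.\ the Maple implementation) produces, for each of the two recurrences, a rational proof certificate $R(n,k)$ such that, with $G(n,k) = R(n,k)F(n,k)$, the WZ equation $F(n+1,k) - F(n,k) = G(n,k+1) - G(n,k)$ holds; summing over $k$ and checking the boundary terms and the $n=0$ base case then establishes $\sum_k F(n,k) = 1$ for all $n \in \mathbb{N}_0$. These certificates are, as the authors note elsewhere, typically large, so I would simply record them (or point to the supplementary HAL draft) rather than display them inline.

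Finally, having verified the identities for all nonnegative integers $n$, I would invoke Carlson's theorem \cite[p.~39]{Bailey1935} in the manner of \cite{EkhadZeilberger1994}: both sides of each recurrence, as functions of $n$ (equivalently of $x = -n$), are analytic and of suitable exponential growth in the relevant half-plane wherever the ${}_{3}F_{2}(-1)$ series converge, so agreement on $\mathbb{N}_0$ forces agreement for all real (indeed complex) $x$ in the domain of convergence. This yields the two recurrences as stated. The main obstacle I anticipate is not conceptual but computational and bookkeeping-related: correctly assembling $F(n,k)$ after clearing denominators---especially for the second recurrence, where the shift is $y \mapsto y+2$ and the summand of $h(x,y+2)$ must be re-expressed in terms of $\frac{(-n)_k(-n)_k}{(y-n)_k(y-n)_k}$ via ratios of the form $\frac{(y-n)_k}{(y+2-n)_k}$, introducing extra linear factors---and then feeding a correctly normalized $F(n,k)$ into the WZ machinery so that the certificate it returns actually verifies; a sign error or an off-by-one in the Pochhammer shifts would cause the certificate check to fail, and isolating such an error inside a bulky rational function is the fiddly part.
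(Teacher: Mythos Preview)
Your proposal is correct and follows essentially the same approach as the paper's proof: set $x=-n$, reduce to a finite hypergeometric sum for $n\in\mathbb{N}_0$, apply the WZ method to obtain a proof certificate for each of the two recurrences, and then invoke Carlson's theorem to extend to general $x$. One small slip: the \emph{first} recurrence is a shift in $x$ (it sends $h(x,y)$ to $h(x+1,y)$), not a shift $y\mapsto y+1$ as you wrote; this changes nothing in the method, only in how you assemble $F(n,k)$, and the paper in fact displays the resulting certificate explicitly for that case.
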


\begin{proof}
 Write $x = -n$. We may mimic our previous WZ proofs, via Carlson's theorem, to prove the first out of the two ${}_{3}F_{2}(-1)$-recurrences shown 
 above. A corresponding WZ certificate is shown below: $$ \frac{k y (k-n+y)^2 \left(-2 k n+k y-2 k+2 n^2-2 n y+4 n-2 y+2\right)}{(k-n-1)^2 (n - y + 
 1)^2 \left(2 k^2-4 k n+2 k y+2 n^2-2 n y+y^2\right)}. $$ We may apply the same approach to prove the latter ${}_{3}F_{2}(-1)$-formula under 
 consideration. A WZ proof certificate is given explicitly in the previously provided HAL URL. 
\end{proof}

 With the notation $$ s(x,y) = {}_{3}F_{2}\!\!\left[ \begin{matrix}x,x,1 \vspace{1mm}\\ x+y,x+y \end{matrix} \ \Bigg| \
 {-1} \right], $$ we may express the recurrences in Theorem \ref{theoremfurther3F2n1} 
 as follows: 
\begin{align*}
s(x,y) & = 1- \frac{x^2}{(x + y)^2} s(x+1,y), \\ 
s(x,y) & = \frac{2x^2 + 6xy + 5y^2 + y}{4(x + y)^2}- \frac{y(y + 1)^3}{4(x + y)^2(x + y + 1)^2} s(x,y+2). 
\end{align*}
 If we use the second recurrence to calculate $s(\tfrac{1}{2},2) = \frac{9}{2}\,(1-\frac{\pi}{4})$, $s(1,2) = 4\,(3-4\ln (2))$, 
 and $s(\tfrac{1}{2},1) = G$, we 
 find the corresponding series in Lupa\c{s} \cite[p. 6]{Lupas2000}. If we combine the recurrences to write $s(x,y)$ as a function of $s(x+1,y+2)$ and of 
 $s(x+2,y+2)$, we obtain, for $x=\frac{1}{2}$ and $y=1$, a series for Catalan's constant proved in \cite{HessamiPilehroodHessamiPilehrood2010}. 

\section{Experimentally discovered ${}_{5}F_{4}(-1)$-recurrences}\label{section5F4n1}

\begin{theorem}
 The recurrences
\begin{align*}
 & \left(x+\frac{y-1}{2}\right)\cdot {}_{5}F_{4}\!\!\left[ \begin{matrix} x,x,x,x + 
 \frac{y+1}{2},1 \vspace{1mm}\\ x+y,x+y,x+y,x+\frac{y-1}{2} \end{matrix} \ \Bigg| \ {-1} \right] = x+\frac{y-1}{2} \\ 
 & - \frac{x^3}{(x+y)^3} \left(x+1+\frac{y-1}{2}\right)\cdot 
 {}_{5}F_{4}\!\!\left[ \begin{matrix} x+1,x+1,x+1,x+1+\frac{y+1}{2},1 \vspace{1mm}\\ x+1+y,x+1+y,x+1+y,x+1+\frac{y-1}{2} \end{matrix} 
 \ \Bigg| \ {-1} \right] \\ 
 & \left(x+\frac{y-1}{2}\right)\cdot {}_{5}F_{4}\!\!\left[ \begin{matrix} x,x,x,x+\frac{y+1}{2},1 \vspace{1mm}\\ x+y,x+y,x+y,x+\frac{y-1}{2} \end{matrix} \ \Bigg| \ {-1} \right] = \frac{x-1}{2}+y \\ 
 & - \frac{y^3}{(x+y)^3} \left(x+\frac{y}{2}\right)\cdot
 {}_{5}F_{4}\!\!\left[ \begin{matrix} x,x,x,x+\frac{y+2}{2},1 \vspace{1mm}\\ x+y+1,x+y+1,x+y+1,x+\frac{y}{2} \end{matrix} \ \Bigg| \ {-1} \right]
\end{align*}
 hold true, if all of the above series converge. 
\end{theorem}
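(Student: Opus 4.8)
The plan is to follow the WZ-plus-Carlson strategy already used for Theorems~\ref{theoremfirst3F21}--\ref{theoremfurther3F2n1}, after first extracting a near-telescoping cancellation hidden in the ${}_{5}F_{4}$ parameters. Since $x+\tfrac{y+1}{2}=\bigl(x+\tfrac{y-1}{2}\bigr)+1$, we have $\bigl(x+\tfrac{y+1}{2}\bigr)_k\big/\bigl(x+\tfrac{y-1}{2}\bigr)_k=\bigl(x+\tfrac{y-1}{2}+k\bigr)\big/\bigl(x+\tfrac{y-1}{2}\bigr)$, so the prefactor $\bigl(x+\tfrac{y-1}{2}\bigr)$ exactly cancels the stray lower parameter, and the left-hand side of both recurrences equals
\[
f(x,y):=\sum_{k=0}^{\infty}\Bigl(x+\tfrac{y-1}{2}+k\Bigr)\frac{(x)_k^3}{(x+y)_k^3}(-1)^k .
\]
The same cancellation on the right-hand sides (with $x\mapsto x+1$ in the first recurrence, and with weight $x+\tfrac{y}{2}+k$ and core $(x)_k^3/(x+y+1)_k^3$ in the second, which is exactly $f(x,y+1)$) reduces the two identities to
\[
f(x,y)=\Bigl(x+\tfrac{y-1}{2}\Bigr)-\frac{x^3}{(x+y)^3}\,f(x+1,y),\qquad f(x,y)=\Bigl(\tfrac{x-1}{2}+y\Bigr)-\frac{y^3}{(x+y)^3}\,f(x,y+1),
\]
which are of the two shapes handled by Lemma~\ref{generalizeGuillera} and its symmetric variant.

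Then I would put $x=-n$. Writing out $f(-n,y)$, $f(1-n,y)$, and $f(-n,y+1)$ and dividing each recurrence by the relevant rational prefactor, each identity becomes the assertion that a single proper hypergeometric term $w(n,k)$ — a rational multiple of $\bigl((-n)_k^3/(y-n)_k^3\bigr)(-1)^k$ — satisfies $\sum_{k\ge0}w(n,k)=1$; here one uses only that the two ${}_{5}F_{4}$ cores appearing in each identity differ by the rational factor $\bigl((n-k)(y-n)/(n(y-n+k))\bigr)^3$ (first recurrence) or $\bigl((y-n)/(y-n+k)\bigr)^3$ (second). Restricting $n$ to $\mathbb{N}_0$, the factor $(-n)_k$ kills all terms with $k>n$, so the sum is finite, and I would run Zeilberger's creative-telescoping algorithm to obtain a rational certificate $R(n,k)$ with $G(n,k):=R(n,k)\,w(n,k)$ satisfying $w(n+1,k)-w(n,k)=G(n,k+1)-G(n,k)$. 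Summing over $k$ telescopes the right side to boundary terms that vanish thanks to the finite support, so $\sum_k w(n,k)$ is independent of $n$; evaluating at $n=0$ (a single term equal to $1$) settles the case $n\in\mathbb{N}_0$. As in the paper's earlier proofs, the explicit certificates — sizeable because of the cubed Pochhammers — would be deposited in the accompanying HAL preprint rather than printed inline.

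To pass from $n\in\mathbb{N}_0$ to arbitrary $x$ in the domain of convergence I would invoke Carlson's theorem \cite[p.~39]{Bailey1935}, exactly as in \cite{EkhadZeilberger1994} and in the proof of Theorem~\ref{theoremfirst3F21}: each side, as a function of $n$, is analytic and of admissible exponential type in a right half-plane and the two sides agree on $\mathbb{N}_0$, hence identically, the thin set where a denominator ($x+y$, $x+\tfrac{y-1}{2}$, or $x+\tfrac{y}{2}$) vanishes being covered by continuity. I expect the real obstacle to be the size and symbolic verification of the WZ certificates: the triple Pochhammers $(x)_k^3$ and $(x+y)_k^3$ push up the order and degree of the Zeilberger recurrence, so checking the telescoping relation and confirming that every boundary term genuinely vanishes (including at degenerate parameter values) is the delicate step; the Carlson extension and the repackaging into the stated $r_1,r_2$ form — which is what makes these recurrences usable for the $1/\pi$- and $G$-accelerations of Section~\ref{section5F4n1} — are routine by comparison.
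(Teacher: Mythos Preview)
Your proposal is correct and follows essentially the same route as the paper: set $x=-n$, reduce to a finite hypergeometric sum via the vanishing of $(-n)_k$, produce a WZ certificate with Zeilberger's algorithm (the paper likewise relegates the explicit certificates to the HAL draft), and then extend to general $x$ by Carlson's theorem. Your preliminary cancellation of the paired parameters $x+\tfrac{y\pm1}{2}$ to rewrite both sides as the weighted sum $f(x,y)=\sum_{k\ge0}\bigl(x+\tfrac{y-1}{2}+k\bigr)(x)_k^3(x+y)_k^{-3}(-1)^k$ is a clean extra step the paper does not spell out, but it only streamlines the certificate and does not alter the underlying WZ-plus-Carlson argument.
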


\begin{proof}
 Again, we set $x = -n$, and we may apply the WZ method and Carlson's theorem in much the same way as before. WZ proof certificates for each out of the 
 two recursions shown above are given explicitly in via the previously provided HAL site. 
\end{proof}

 With the notation: $$ s(x,y)=	\left(x+\frac{y-1}{2}\right)\cdot {}_{5}F_{4}\!\!\left[ \begin{matrix} x,x,x,x+\frac{y+1}{2},1 \vspace{1mm}\\ x+y,x+y,x+y,x+\frac{y-1}{2} \end{matrix} \ \Bigg| \ {-1} \right], $$ 
 we find that: 
\begin{align*}
s(x,y) & = x+\frac{y-1}{2}- \frac{x^3}{(x+y)^3} s(x+1,y), \\ 
 s(x,y) & = \frac{x-1}{2}+y- \frac{y^3}{(x+y)^3} s(x,y+1). 
\end{align*}
 Note that for $x=\tfrac{1}{2}, y=\tfrac{1}{2}$, $s(x,y)$ is the Bauer--Ramanujan series for $\frac{1}{\pi}$ of convergence rate $1$ shown in Table 
 \ref{table:1}. If we try to accelerate this series with one of the recurrences, we get the Bauer--Ramanujan series again in both cases. Hence no 
 acceleration. However, if we use them one after the other, then we obtain Ramanujan's series for $\frac{1}{\pi}$ of convergence rate $\frac{1}{64}$, 
 which is indicated in Table \ref{table:1}. For $x=\tfrac{1}{2}, y=1$, we have that $$s(x,y)= \frac{1}{2}\sum_{n=0}^{\infty} (-1)^n \frac{1}{(2n+1)^2} = 
 \frac{G}{2}.$$ Using both recurrences one after the other leads to \eqref{Catalanmotivating}. For $x=1, y=1$, we obtain, in this case, the series for 
 $ \pi^2$ of convergence rate $\frac{1}{64}$ indicated in Table \ref{table:1}. 

\section{Experimentally discovered ${}_{6}F_{5}(1)$-recurrences}\label{section6F51}

\begin{theorem}\label{experimental6F51}
 The recurrences
 \begin{align*}
 & \left(x+\frac{y-1}{2}\right)\cdot {}_{6}F_{5}\!\!\left[ \begin{matrix} x,x,x,x,x+\frac{y+1}{2},1 \vspace{1mm}\\ x+y,x+y,x+y,x+y,x+\frac{y-1}{2} \end{matrix} \ \Bigg| \ {1} \right] = x+\frac{y-1}{2} \\ 
 & + \frac{x^4}{(x + y)^4} \left(x+1+\frac{y-1}{2}\right)\cdot 
 {}_{6}F_{5}\!\!\left[ \begin{matrix} x+1,x+1,x+1,x+1,x+1+\frac{y+1}{2},1 \vspace{1mm}\\ x+1+y,x+1+y,x+1+y,x+1+y,x+1+\frac{y-1}{2} \end{matrix} \ \Bigg| \ {1} \right] \\ 
 & \left(x+\frac{y-1}{2}\right)\cdot {}_{6}F_{5}\!\!\left[ \begin{matrix} x,x,x,x,x+\frac{y+1}{2},1 \vspace{1mm}\\ x+y,x+y,x+y,x + 
 y,x+\frac{y-1}{2} \end{matrix} \ \Bigg| \ {1} \right] = \frac{5y^2 + 6(x - 1)y + 2(x - 1)^2}{4(2y - 1)} \\ 
 & - \frac{y^5}{2(2y - 1)(x + y)^4} \left(x+\frac{y}{2}\right)\cdot 
 {}_{6}F_{5}\!\!\left[ \begin{matrix} x,x,x,x,x+\frac{y+2}{2},1 \vspace{1mm}\\ x+y+1,x+y+1,x+y+1,x+y+1,x+\frac{y}{2} \end{matrix} \ \Bigg| \ {1} \right]
\end{align*}
 hold true, if all of the above series converge. 
\end{theorem}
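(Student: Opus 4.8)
The plan is to prove both ${}_{6}F_{5}(1)$-recurrences by exactly the WZ-plus-Carlson argument already used for Theorems \ref{theoremfirst3F21}, \ref{notlemma}, \ref{againnotlemma}, and \ref{theoremfurther3F2n1}. Write $s(x,y)$ for the left-hand side $\left(x+\frac{y-1}{2}\right)\cdot {}_{6}F_{5}[\cdots]$, so that each of the two assertions has the shape $s(x,y) = A(x,y) + B(x,y)\, s(x+\sigma_1,y+\sigma_2)$ for explicit rational functions $A,B$, with $(\sigma_1,\sigma_2)=(1,0)$ in the first case and $(\sigma_1,\sigma_2)=(0,1)$ in the second. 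Substituting $x=-n$ and expanding both $s$-terms as hypergeometric series in a summation index $k$ (the four factors $(x)_k$ and the $(1)_k/k!$ cancellation make each term a ratio of Pochhammer symbols in $k$), each recurrence becomes equivalent to the single-sum identity $\sum_{k\ge 0} F(n,k) = 1$, where $F(n,k)$ is the summand obtained by subtracting $B(-n,y)$ times the shifted series from the unshifted one and dividing through by $A(-n,y)$; here $y$ is carried along as a free parameter.

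Next I would restrict $n$ to $\mathbb{N}_0$. Because each numerator carries the factor $(x)_k=(-n)_k$ (indeed to the fourth power), $F(n,k)$ vanishes for $k>n$, so the identity to be shown becomes the finite-sum statement $\sum_{k=0}^{n} F(n,k)=1$ in the two variables $n$ and $y$. At this point I would apply the WZ algorithm to produce a rational certificate $R(n,k)$ such that, setting $G(n,k)=R(n,k)F(n,k)$, the WZ relation $F(n+1,k)-F(n,k)=G(n,k+1)-G(n,k)$ holds; verifying this is a routine rational-function identity. Summing over $k$ and using that $G$ vanishes at the endpoints shows $\sum_k F(n,k)$ is constant in $n$, and evaluating at $n=0$ pins the constant at $1$, establishing both recurrences for all $n\in\mathbb{N}_0$. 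Finally, since both sides of each recurrence are analytic in $x$ and of the requisite exponential type on the relevant half-plane, Carlson's theorem \cite[p.\ 39]{Bailey1935} (applied as in \cite{EkhadZeilberger1994}) upgrades the identities from $x=-n$, $n\in\mathbb{N}_0$, to all $x$ in the stated region of convergence.

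The main obstacle is purely computational rather than conceptual: for ${}_{6}F_{5}(1)$-series with a quadruple parameter shift and the extra half-integer parameter $x+\frac{y\pm1}{2}$, the WZ certificates $R(n,k)$ are enormous rational functions, as the paper already notes for its other recurrences, so the real work is in running the WZ procedure and recording/checking the certificate. A secondary point needing a little care is the bookkeeping of the half-integer parameters $x+\frac{y+1}{2}$, $x+\frac{y-1}{2}$ and their shifts when assembling $F(n,k)$, and confirming that the denominator factor $2y-1$ appearing in the second recurrence plays no role in the WZ telescoping; for generic $y$ both of these are harmless.
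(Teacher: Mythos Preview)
Your proposal is correct and follows essentially the same approach as the paper: substitute $x=-n$, reduce each recurrence to a finite-sum identity $\sum_{k=0}^{n}F(n,k)=1$ for $n\in\mathbb{N}_0$ via the vanishing of $(-n)_k$, apply the WZ method to obtain (large) rational certificates, and then invoke Carlson's theorem to extend to general $x$. The paper's own proof is a one-line pointer back to this same template, with the explicit certificates deferred to an external HAL draft.
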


\begin{proof}
 Again, we write $x = -n$, and we may again mimic our WZ-based proofs given as above. WZ proof certificates for each case are given explicitly in the HAL 
 site previously given. 
\end{proof}

\subsection{Generalizations of Guillera's series}\label{GeneralizationsGuillera}
 With the notation $$ s(x,y) = \left(x+\frac{y-1}{2}\right)\cdot {}_{5}F_{4}\!\!\left[ \begin{matrix} x,x,x,x,x+\frac{y+1}{2},1 \vspace{1mm}\\ x+y,x+y,x+y,x+y,x+\frac{y-1}{2} \end{matrix} \ \Bigg| \ {1} \right],$$ 
 Theorem \ref{experimental6F51} gives us that 
\begin{align*}
 s(x,y) & = x+\frac{y-1}{2}+ \frac{x^4}{(x+y)^4} s(x+1,y), \\ 
 s(x,y) & = \frac{5y^2 + 6(x - 1)y + 2(x - 1)^2}{4(2y - 1)}- \frac{y^5}{2(2y - 1)(x + y)^4} s(x,y+1). 
\end{align*}
 Note that for $x = \tfrac{1}{2}, y=\tfrac{3}{2}$, $s(x,y)$ can be calculated from its partial sums:
 $$s(\tfrac{1}{2},\tfrac{3}{2})=\frac{1}{4} \sum_{n = 
 0}^{\infty} \left( 2^{-8} \right)^{n} \binom{2n}{n}^4 \frac{4n+3}{(n+1)^4}=4-\frac{32}{\pi^2} $$ If we try to accelerate this series with the first of 
 the recurrences, we get the same series. With the second one, we get Guillera's formula 
 for $\frac{1}{\pi^2}$ introduced in \cite{Guillera2003} and reproduced in Table \ref{table:1}. 
 
 If we use both the recurrences for $x=1, y=1$, we obtain a series for $\zeta(3) $ given in Lupa\c{s}'s paper \cite{Lupas2000}. Now, let us further simplify 
 our expression for the latter recursion given in Theorem \ref{experimental6F51}, letting $h$, $r_{1}$, and $r_{2}$ be as below: 
\begin{align*}
 & \text{summand}(x, y, k) = \frac{\left(x+\frac{y-1}{2}\right) 
 (x)_k^4 \left(x+\frac{y + 1}{2}\right)_k}{\left(x+\frac{y-1}{2}\right)_k 
 (x + y)_k^4}, \\
 & r_{1}(x, y) = \frac{6 (x-1) y+2 (x-1)^2+5 y^2}{4 (2 y-1)}, \\
 & r_{2}(x, y) = -\frac{y^5}{2 (2 y-1) (x+y)^4}. 
\end{align*}
 So, we obtain that 
\begin{equation}\label{hr1r2h}
 \sum _{k=0}^{\infty} \text{summand}(x,y,k) = r_{1}(x,y) + r_{2}(x,y) \sum _{k = 0}^{\infty} \text{summand}(x,y+1,k), 
\end{equation}
 leading us to the following. 
 A direct application of Lemma \ref{generalizeGuillera}, 
 for $\text{summand}(x, y, k)$, $r_{1}(x, y)$, and $r_{2}(x, y)$ as above, may 
 be regarded as a generalization of Guillera's series for $\frac{1}{\pi^2}$ introduced in \cite{Guillera2003} and reproduced in Table 
 \ref{table:1}, in view of our derivation of this formula along with the following results. 

\begin{example}
 Setting $x = \frac{3}{2}$ and $y = \frac{3}{2}$, our series acceleration formula given in Lemma \ref{generalizeGuillera} yields the following: 
 $$ \frac{128}{\pi ^2} = \sum _{j=0}^{\infty} \left(-2^{-12} \right)^j \binom{2 j}{j}^5 \frac{20j^2+32 j+13}{(j+1)^4}. $$ 
\end{example}

\begin{example}
 Setting $x = \frac{5}{2}$ and $y = \frac{3}{2}$, Lemma \ref{generalizeGuillera} gives us that: 
 $$ \frac{2048}{81\pi ^2} 
 = \sum _{j=0}^{\infty} \left(-2^{-12} \right)^j \binom{2 j}{j}^5 \frac{20j^2+56 j+41}{(j+1)^4(j+2)^4}. $$ 
\end{example}

\begin{example}
 Setting $x = \frac{1}{2}$ and $y = 1$, Lemma \ref{generalizeGuillera} yields 
 $$ 28 \zeta (3) = \sum _{j=1}^{\infty} \frac{ \left(-2^{8} \right)^j \left( - 10 j^2 +6 j - 1 \right)}{j^5 \binom{2 j}{j}^5}. $$
\end{example}

\begin{example}
 If we use both recurrences from Theorem \ref{experimental6F51} one after the other, with $x = \frac{3}{2}$ and $y = \frac{3}{2}$, 
 we obtain Guillera's series for $\frac{1}{\pi^2}$ from \cite{Guillera2008} reproduced in Table \ref{table:1}. It has convergence rate $\frac{1}{2^{10}}$.
\end{example}

\section{On the experimental discovery of hypergeometric transforms}\label{sectionnoncomp}
 In their classic text, \cite{BorweinBaileyGirgensohn2004} 
 describe how a human-computer dialogue is central to experimental mathematics 
 \cite[p.\ vii]{BorweinBaileyGirgensohn2004}. This is exemplified by the use of computers to explore new areas in mathematics, and via the development of a 
 better understanding of mathematical phenomena using computer-based results and computer-based tools \cite[p.\ vii--viii]{BorweinBaileyGirgensohn2004}. 
 In this regard, we have used a computer-based approach to study the problem of accelerating series, and our computer explorations have led us to identify 
 patterns that show us that many previously published results by many different authors, including  \cite{Ramanujan1914},  
 \cite{Guillera2003,Guillera2008},      
 \cite{HessamiPilehroodHessamiPilehrood2008,HessamiPilehroodHessamiPilehrood2010}, and  \cite{Lupas2000}, are special cases of our methods. 
 We find it appropriate to explain how we had obtained and discovered our main results in an experimental way. 

 In 2018 \cite{LevrieNimbran2018}, the recurrence $$ {}_{3}F_{2}\!\!\left[ \begin{matrix}x,x,1 \vspace{1mm}\\ x+y,x+y \end{matrix} \ \Bigg| \
 {1} \right] =\frac{2x+3y - 2}{2(2y-1)} + \frac{y^3}{2(2y-1)(x + y)^2}\cdot 
 {}_{3}F_{2}\!\!\left[ \begin{matrix}x+1,x+1,1 \vspace{1mm}\\ x+1+y,x+1+y \end{matrix} \ \Bigg| \
 {1} \right] $$ was introduced and proved. If we write 
\begin{equation}\label{LevrieNimbrans}
 s(x, y) = {}_{3}F_{2}\!\!\left[ \begin{matrix}x,x,1 \vspace{1mm}\\ x+y,x+y \end{matrix} \ \Bigg| \
 {1} \right] = 
 \sum_{n = 0}^\infty \frac{(x)_n^2}{(x+y)_n^2} 
\end{equation}
 and then write $f(x,y,n)=\frac{2 n +2 x -y}{2(2y -1)}$, we find that this summand may be manipulated in the below indicated manner, starting with the 
 reindexing argument shown below, and noting that a telescoping argument is implicitly used to simplify the below expression involving $f(x, y, n)$ 
 and $f(x, y, n+1)$: 
\begin{align*}
 s(x,y) & = 1 + \sum_{n=0}^\infty \frac{(x)_{n+1}^2}{(x+y)_{n+1}^2} \\ 
 & = 1 + \sum_{n=0}^\infty 
 \frac{(x)_{n}^2}{(x+y)_{n}^2}\cdot \frac{\left(x +n \right)^{2}}{\left(x +y +n \right)^{2}} \\
 & = 1+\sum_{n = 
 0}^\infty \frac{(x)_{n}^2}{(x+y)_{n}^2} \left(f(x, 
 y,n) - f(x,y,n+1)\frac{\left(x +n \right)^{2}}{\left(x +y +n \right)^{2}} + \frac{y^{3}}{2 \left(2 y -1\right) \left(x +y +n \right)^{2}} \right) \\
 & = 1+f(x,y,0)+ \frac{y^{3}}{2 \left(2 y -1\right)} \sum_{n = 
 0}^\infty \frac{(x)_{n}^2}{(x+y)_{n+1}^2 } \\
 & = \frac{2x+3y-2}{2(2y-1)}+ \frac{y^{3}}{2 \left(2 y -1\right)(x+y)^2} \sum_{n=0}^\infty \frac{(x)_{n}^2}{(x+y+1)_{n}^2}. 
\end{align*}
 Our approach toward obtaining and proving hypergeometric recurrences is similar relative to the above derivation and involves computer-based 
 experiments with summands as in \eqref{LevrieNimbrans}. Our research is based on explorations involving the use of Computer Algebra Systems to 
 experiment with hypergeometric series in order to emulate the telescoping argument involving $f(x, y, n)$, as above. This has led us toward patterns based on 
 our computational experiments that generalize closed forms due to Ramanujan, Lupa\c{s}, and Guillera, and others. This is motivated by open 
 problems concerning Ramanujan's and Guillera's series, such as the problem as to how it may be possible to generalize Guillera's series so as to obtain 
 hypergeometric expansions for $\frac{1}{\pi^k}$ for higher powers, with  
     of $k > 2$ \cite{Guillera2020}. As below, we offer some brief descriptions as to how we   
  had discovered the hypergeometric transforms given in this article. 

 For Theorem \ref{theoremfirst3F21}, a similar approach was used, relative to the above manipulations of the summand for \eqref{LevrieNimbrans}, 
 and the rational function identity given below was involved: 
\begin{align*}
 & \frac{(x+n)(x + n + 1)}{(x +y +n )(x+y+n+1)} = \\
 & \frac{2n +2x-y+1}{2(2y -1)}-\frac{2n+2 + 
 2x-y+1}{2(2y -1)} \frac{(x+n)(x+n+1)}{(x +y +n )(x+y+n+1)} \\ 
 & +\frac{(y-1)y(y+1)}{2 (2 y -1)}\frac{1}{(x +y +n )(x+y+n+1)}.
\end{align*}
 Similarly, for Theorem \ref{theoremfurther3F2n1}, summand manipulations were applied 
 to $$ s(x,y)=\sum_{n=0}^\infty (-1)^n \frac{(x)_n^2}{(y)_n^2} $$ 
 so as to obtain the identity 
\begin{align*} 
 & (x+n)^2 = \\
 & \text{summand}(x, y, n) + 
 \text{summand}(x, y, n + 1)\frac{(x+n)^2 }{(x+y+n+1)^2} + 
 \frac{y(1+y)^3}{4(x+y+n+1)^2} 
\end{align*}
 with $ \text{summand}(x,y,n) = \frac{2n^2+2(2x + y)n + 2x^2 + 2xy - y^2-y}{4}$. 
 Similar approaches led to our discoveries for the remaining Theorems in this article. 

\section{Conclusion}\label{sectionConclusion}
 In an article due to \cite{Guillera2020}, 
 conjectured Ramanujan-like series for $\frac{1}{\pi^k}$ for $k \geq 3$ were recorded, such as the formula 
\begin{align*}
 & \sum_{n=0}^{\infty} \frac{ \left( \frac{1}{2} \right)_{n}^{5} 
 \left( \frac{1}{3} \right)_{n} \left( \frac{2}{3} \right)_{n} 
 \left( \frac{1}{4} \right)_{n} \left( \frac{3}{4} \right)_{n} }{ \left( 1 \right)_{n}^{9} }
 (4528 n^4 + 3180 n^3 + 972 n^2 + 147 n + 9) 
 \left( -\frac{27}{256} \right)^{n} \\
 & = \frac{768}{\pi^4} 
\end{align*}
 conjectured by Zhao. Guillera also proved a bilateral series expansion for negative powers of $\pi$ \cite{Guillera2020}. This was achieved via the 
 periodicity of the function $$ f(x) = e^{-i \pi x} (\cos \pi x)^{2j+1} \prod_{s_{k} \neq \frac{1}{2}} \frac{\cos \pi x - \cos \pi s_{k}}{1 - \cos \pi s_{k}} 
 \sum_{n = -\infty}^{\infty} A(n, x) $$ for a function $A(n, x)$ of the form $$ A(n, x) = \left( \prod_{i=0}^{2m} \frac{ (s_{i})_{n + x} }{ (1)_{n+x} } 
 \right) \sum_{k=0}^{m} a_{k} (n+x)^{k} z_{0}^{n+x}. $$ We encourage the pursuit of research based on how our recursive and WZ-based methods may 
 be applied to obtain series for $\frac{1}{\pi^k}$ for higher   values  of $k > 2$. In this regard, the main obstacle is due to how difficult it is to obtain series for 
 $\frac{1}{\pi^{k > 2}}$ that may be accelerated according to our methods. For example, there are classically known series for $\frac{1}{\pi^2}$ that may 
 be easily proved inductively and that may be accelerated using our hypergeometric transforms, but it is not obvious as to how it may be possible to start 
 with a relatively ``simple'' series for $\frac{1}{\pi^3}$ or $\frac{1}{\pi^4}$ and then apply our evaluation technique. One possibility to consider might be 
 based on manipulations of the Cauchy product of known series for $\frac{1}{\pi^2}$, and we leave it to a separate project to investigate this option. 

 Consider the two recurrences for the hypergeometric expression $s(x, y)$ involved in Section \ref{GeneralizationsGuillera}. As above, we have applied these 
 recurrences one after the other. If we instead apply these recurrences so as to express $s(x, y)$ in terms of $s(x+3, y+3)$, then, by mimicking our 
 proof of Guillera's series from \cite{Guillera2008} shown in Table \ref{table:1}, we can show that 
\begin{equation}\label{fromseparate}
 \frac{8}{\pi ^2} 
 = \sum_{j=0}^{\infty} \left( -2^{-60} \right)^{j} \binom{6 j}{3 j}^5
 \frac{ p(j)}{(2 j-1)^5 (6 j-5)^3 (6 j-1)^5}, 
\end{equation} where $ p(j) $
 denotes the following polynomial.
\begin{verbatim}
-125 + 950*j + 31660*j^2 - 50024*j^3 - 14995984*j^4 + 
235669685116640*j^5 - 5284712547631680*j^6 + 52653694429319040*j^7 - 
306803615328753408*j^8 + 1157357544632934912*j^9 - 
2952738590842027008*j^10 + 5159445973725726720*j^11 - 
6097655344343592960*j^12 + 4666164796084690944*j^13 - 
2088682533545361408*j^14 + 415520397881671680*j^15
\end{verbatim}
 The above series for $\frac{1}{\pi^2}$ has a convergence rate of $\frac{1}{1073741824}$. Formulas as in \eqref{fromseparate} may also be determined 
 by applying series $n$-sections to the accelerated series given in this article, as explained below, and we encourage the investigation of this subject. 

 Let us begin, for example, with the series for $\pi^2$ from \cite{Campbell2022AMEN} recorded in Table \ref{table:1}. By applying a series bisection to 
 this series, we find that 
\begin{align*}
 & \frac{\pi ^2}{4} = 
 \sum _{n=1}^{\infty } \frac{2^{8 n} (2 n+1) (6 n+1)}{2 n (4 n+1)^2 \binom{4 n}{2 n}^3 } + 
 \sum _{n = 1}^{\infty } \frac{2^{8 n} n (3 n-1)}{4 (4 n-1)^2 (2 n-1) \binom{4 n-2}{2 n-1}^3}, 
\end{align*}
 and a reindexing-type argument then gives us that 
\begin{equation}\label{recentlydifferent}
 \pi^2 = \sum_{n=1}^{\infty} 2^{8 n} \frac{240 n^4-8 n^3-40 n^2-n+1}{n^2 (2 n-1) (4 n+1)^2 \binom{4 n}{2 n}^3}, 
\end{equation}
 and this was recently proved in a different way, via the WZ method, by \cite{Campbellpreprint}. We see that \eqref{recentlydifferent} has an 
 acceleration rate of $\frac{1}{16}$, relative to the acceleration rate of $\frac{1}{4}$ in the original formula from \cite{Campbell2022AMEN}. Chu and 
 Zhang used the same kind of acceleration approach via series bisections in Example 89 in \cite{ChuZhang2014}, and we may obtain similarly accelerated 
 series via series $n$-sections, as opposed to series bisections. 

 We have numerically found that $$ \frac{\pi ^4}{8} = \sum _{n=1}^{\infty} \frac{2^{8 n} (3 n-1) \left(7 n^2-5 n+1\right)}{n^7 \binom{2 n}{n}^7}. $$ 
 How can this conjectured formula be proved using our methods, or otherwise? 

\acknowledgements
\label{sec:ack}
 The authors want to thank a number of expert Reviewers who have offered many helpful suggestions that have considerably improved 
 the presentation of our work. 

\nocite{*}
\bibliographystyle{abbrvnat}
\bibliography{AccelerationV30}
\label{sec:biblio}

\end{document}